\title{An analogue of Stone duality via support}
\author{Henning Krause}
\address{Fakult\"at f\"ur Mathematik\\
Universit\"at Bielefeld\\ D-33501 Bielefeld\\ Germany}
\email{hkrause@math.uni-bielefeld.de}
\theoremstyle{plain}
\newcounter{thms}
\newtheorem{thm}[thms]{Theorem}
\newtheorem{prop}[thms]{Proposition}
\newtheorem{lem}[thms]{Lemma} 
\newtheorem{cor}[thms]{Corollary}
\theoremstyle{definition}
\newtheorem{defn}[thms]{Definition}
\newtheorem{exm}[thms]{Example}
\theoremstyle{remark}
\newtheorem{rem}[thms]{Remark}
\newcommand{\cl}{\operatorname{cl}}
\newcommand{\Cl}{\operatorname{Cl}}
\newcommand{\Ext}{\operatorname{Ext}}
\newcommand{\Hom}{\operatorname{Hom}}
\newcommand{\Id}{\operatorname{Id}}
\renewcommand{\mod}{\operatorname{mod}}
\newcommand{\Ob}{\operatorname{Ob}}
\newcommand{\Pt}{\operatorname{Pt}}
\newcommand{\Sp}{\operatorname{Sp}}
\newcommand{\Spc}{\operatorname{Spc}}
\newcommand{\Spec}{\operatorname{Spec}}
\newcommand{\supp}{\operatorname{supp}}
\newcommand{\Thick}{\operatorname{Thick}}
\newcommand{\BLat}{\mathbf{BLat}} 
\newcommand{\Frm}{\mathbf{Frm}} 
\newcommand{\JSLat}{\mathbf{JSLat}} 
\newcommand{\Top}{\mathbf{Top}} 
\newcommand{\two}{\mathbf{2}}
\newcommand{\op}{\mathrm{op}}
\newcommand{\iso}{\xrightarrow{\raisebox{-.4ex}[0ex][0ex]{$\scriptstyle{\sim}$}}}
\newcommand{\longiso}{\xrightarrow{\ \raisebox{-.4ex}[0ex][0ex]{$\scriptstyle{\sim}$}\ }}
\newcommand{\principal}[1]{\left\downarrow#1\right.}
\newcommand*{\intref}[2]{\def\tmp{#1}\ifx\tmp\empty\hyperref[#2]{\ref*{#2}}\else\hyperref[#2]{#1~\ref*{#2}}\fi}
\def\B{\mathcal B} 
\def\C{\mathcal C}
\def\calS{\mathcal S} 
\def\T{\mathcal T}
\def\bfD{\mathbf D}
\newcommand{\frp}{\mathfrak{p}}
\def\p{\phi}
\def\s{\sigma}
\def\t{\tau}
\def\Si{\Sigma}
\begin{document}

\keywords{Exact category, Hochster duality, lattice, Stone duality, support, thick
  subcategory, triangulated category}

\subjclass[2020]{18F70 (primary), 18G30 (secondary)}

\begin{abstract}
  The notion of support provides an analogue of Stone duality,
  relating lattices to topological spaces. This note aims to explain in
  lattice theoretic terms what has been developed in the context of
  triangulated categories. In particular, the parallel between support
  via closed and open sets is addressed in terms of Hochster
  duality. As an application we indicate some consequences for tensor
  exact categories.
\end{abstract}

\date{July 23, 2023}

\maketitle

The traditional way of defining support assigns to an object a closed
subset of an appropriate topological space. On the other hand, there
is Stone duality which provides a correspondence between spaces and
their lattices of open subsets. This note aims to explain the connection
between both concepts which is based on Hochster duality.

Interest in this topic started with the seminal work of Balmer on
support for tensor triangulated categories \cite{Ba2005}. Soon after
the first paper appeared, several authors noticed the connection with
Stone duality via Hochster duality \cite{BKS2007,KP2017}. More recent
work focusses on support for triangulated categories, without assuming
a tensor product, or at least without assuming it to be symmetric
\cite{BO2023,GS2022,Kr2023,NVY2019}. This note is following these
ideas, without claiming any originality. The purpose is to treat the
subject purely in terms of lattices, to make some beautiful ideas
accessible for a wider audience. In particular, one may apply the
notion of support in other algebraic contexts, beyond triangulated
categories.  As an illustration we indicate the rudiments of `tensor
exact geometry'. A further bonus is a version of Stone duality that
does not require lattices to be distributive.

\section{Spaces}

We write $\two=\{0,1\}$ and view it either as a lattice via the usual
partial ordering or as topological space, where $\{1\}$ is open but
not closed. For a lattice $L$ let $L^\op$ denote its dual lattice. For
a topological space $X$ we write $\Omega(X)=\Hom_\Top(X,\two)$ for the
lattice of open sets and $\Cl(X)=\Hom_\Top(X,\two)^\op$ for the lattice of
closed sets.

\section{Join-semilattices}\label{se:semi-latt}

Let $L$ be a \emph{join-semilattice}. Thus $L$ is a poset in which all
finite joins exist, including the join over the empty set which is the
unique minimal element. A morphism of join-semilattices is a map that
preserves all finite joins. In particular, the partial order is
preserved since
\[a\le b\;\iff \; a\vee b=b\quad \text{for}\quad a,b\in
  L.\]
An \emph{ideal} of $L$ is a
subset $I\subseteq L$ that is closed under finite joins and such that
$a\le b$ in $L$ with $b\in I$ implies $a\in I$. Equivalently, $I$ is
of the form $\p^{-1}(0)$ for a morphism $\p\colon L\to\two$.  We write
$\Id(L)$ for the lattice of ideals in $L$, with partial order given by
inclusion. Let us turn $\Id(L)$ into a topological space which we
denote $\Sp(L)$.  Set
\[\supp(a)\colonequals\{I\in \Sp(L)\mid a\not\in I\}\quad \text{for}\quad a\in
  L.\] Note that $\supp(a\vee b)=\supp(a)\cup\supp(b)$ for $a,b\in L$,
and $\bigcap_{a\in L}\supp(a)=\varnothing$.  Thus sets of the form
$\supp(a)$ yield a basis of closed sets for a topology on $\Sp(L)$.
 
\begin{defn}
  A \emph{support datum} on $L$ is a pair $(X,\s)$ consisting of a
  topological space $X$ and a morphism $\s\colon L\to \Cl(X)$.
  A morphism of support
    data $(X,\s)\to(Y,\t)$ is a continuous map $f\colon X\to
  Y$ such that $\s=\Cl(f)\circ\t$.
\end{defn}

A support datum is nothing but a map that assigns to each  $a\in L$ a closed
subset $\s(a)\subseteq X$ such that
  \begin{enumerate}
  \item[($\varnothing$)] $\s(0)=\varnothing$,
  \item[($\vee$)] $\s(a\vee b)=\s(a)\cup\s(b)$ for all $a,b\in L$.
  \end{enumerate}

  Predecessors of the following result are \cite[Theorem~2]{BO2023}
  and \cite[Proposition~5.4.2]{GS2022} in the context of triangulated
  categories.

  \begin{thm}\label{th:semilat}
    The functor $X\mapsto \Cl(X)$ from topological spaces to
    join-semilattices admits $L\mapsto \Sp(L)$ as a right
    adjoint. Thus there is a natural bijection
  \begin{equation}\label{semi-lat}
    \Si\colon\Hom_\Top(X,\Sp(L))\longiso\Hom_\JSLat(L,\Cl(X))
  \end{equation}
which takes a
  continuous map $f\colon X\to \Sp(L)$ to the support datum 
  \[a\longmapsto f^{-1}(\supp(a))\quad\text{for}\quad a\in L.\]  In particular, the pair
  $(\Sp(L),\supp)$ is the final support datum on $L$.
\end{thm}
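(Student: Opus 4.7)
The plan is to establish the bijection $\Sigma$ by exhibiting an explicit inverse $\Psi$, verify naturality in $X$, and then derive the finality statement as the special case $f=\id_{\Sp(L)}$.

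Well-definedness of $\Sigma$ is routine: for a continuous map $f\colon X\to\Sp(L)$ each $f^{-1}(\supp(a))$ is closed since $\supp(a)$ is a basic closed set in $\Sp(L)$, and the assignment $a\mapsto f^{-1}(\supp(a))$ satisfies $(\varnothing)$ and $(\vee)$ because $\supp$ does and preimage commutes with unions and with $\varnothing$.

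For the inverse, given a support datum $\sigma\colon L\to\Cl(X)$, I would set
\[\Psi(\sigma)(x)\colonequals\{a\in L\mid x\notin\sigma(a)\}\qquad(x\in X).\]
Two checks are required. First, $\Psi(\sigma)(x)$ is an ideal of $L$: containing $0$ and closure under finite joins are exactly $(\varnothing)$ and $(\vee)$, while downward closure uses monotonicity of $\sigma$, which is forced by join preservation since $a\le b$ yields $\sigma(a)\subseteq\sigma(a\vee b)=\sigma(b)$. Second, $\Psi(\sigma)$ is continuous; since $\{\supp(a)\mid a\in L\}$ is a basis of closed sets for $\Sp(L)$, this reduces to the identity $\Psi(\sigma)^{-1}(\supp(a))=\sigma(a)$, a direct unfolding of the definitions.

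That same identity gives $\Sigma\circ\Psi=\id$. For $\Psi\circ\Sigma=\id$, using $\supp(a)=\{I\mid a\notin I\}$ I would compute
\[\Psi(\Sigma(f))(x)=\{a\mid f(x)\notin\supp(a)\}=\{a\mid a\in f(x)\}=f(x).\]
Naturality in $X$ follows at once from composition of preimages. Finally, $\Sigma(\id_{\Sp(L)})$ is by definition the support datum $\supp$, so the bijection asserts that every $(X,\sigma)$ factors uniquely through $(\Sp(L),\supp)$, which is the finality statement. The main obstacle is the ideal check for $\Psi(\sigma)(x)$, hinging on the passage from join preservation to monotonicity of $\sigma$; everything else is a matter of unfolding the topology on $\Sp(L)$.
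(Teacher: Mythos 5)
Your proposal is correct and follows essentially the same route as the paper: the same assignment $x\mapsto\{a\in L\mid x\notin\sigma(a)\}$ and the same key identity $f^{-1}(\supp(a))=\sigma(a)$, with the only cosmetic difference that you package it as an explicit two-sided inverse (and spell out the ideal check) where the paper argues surjectivity and injectivity of $\Sigma$ directly.
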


\begin{proof}
  It is easily checked that the map $\Si$ is well defined; it takes
  the identity $\Sp(L)\to \Sp(L)$  to the support datum
  $(\Sp(L),\supp)$.  Given any support datum $(X,\s)$, we define
  $f\colon X\to \Sp(L)$ by setting
  \[f(x)\colonequals\{a\in L\mid x\not\in\s(a)\} \quad \text{for}\quad x\in X.\]
  Then  we have
  \[f^{-1}(\supp(a))=\{x\in X\mid a\not\in f(x)\}=\s(a) \quad
    \text{for}\quad a\in L.\] Thus $f$ is continuous and we see that
  $\Si$ is surjective. For the injectivity of $\Si$, observe that for any
  map  $f\colon X\to \Sp(L)$ and $x\in X$ we have \[f(x)=\{a\in L\mid
    a\in f(x)\}=\{a\in L\mid x\not\in f^{-1}(\supp(a))\}.\]
  Thus $f$ is determined by  $\Si(f)$.
\end{proof}

\begin{rem}
  (1) The assignment $\p\mapsto\p^{-1}(1)$ identifies $\Hom_\JSLat(L,\two^\op)$
with the ideal lattice $\Id(L)$. Taking $a\in L$ to the principal ideal
$\{b \in L\mid b \le a\}$ identifies $L$ with the
poset of \emph{compact elements} in $\Id(L)$.

(2) The join-semilattice $L$ can be
  recovered from the space $\Sp(L)$ as follows. The \emph{specialisation order}
\[x\leq y \;:\iff\; x\in\cl\{y\}\qquad (x,y\in\Sp(L))\]
recovers the partial order on $\Id(L)=\Sp(L)$ that is given by the
inclusion of ideals. Taking the lattice $\Id(L)$ to its poset of
compact elements yields $L$.

(3) The assignment $\p\mapsto\p^{-1}(0)$ identifies $\Hom_\JSLat(L,\two)$
with $\Sp(L)$ as sets. This yields the identification
\[\supp(a)=\{\p\in\Hom_\JSLat(L,\two)\mid \p(a)=1\}\quad \text{for}\quad a\in
  L.\] 
The choice of the topology (by declaring $\supp(a)$ to be closed)  suggests to write
$\Sp(L)=\Hom_\JSLat(L,\two)^\vee$.  Then one may rewrite the bijection
\eqref{semi-lat} as
\[\Hom_\Top(X,\Hom_\JSLat(L,\two)^\vee)\simeq\Hom_\JSLat(L,\Hom_\Top(X,\two)^\op).\]
\end{rem}

\section{Bounded lattices}\label{se:latt}

Let $L$ be a \emph{bounded lattice}. Thus $L$ is a poset in which all
finite joins and all finite meets exist. A morphism of bounded
lattices is a map that preserves all finite joins and all finite
meets. An ideal $I\subseteq L$ is \emph{prime} if $I\neq L$ and
$a\wedge b\in I$ implies $a\in I$ or $b\in I$. Equivalently, $I$ is of
the form $\p^{-1}(0)$ for a morphism $\p\colon L\to\two$.  We write
$\Spc(L)$ for the set of prime ideals of $L$ and turn this into a
topological space.  Set
\[\supp(a)\colonequals\{I\in \Spc(L)\mid a\not\in I\}\quad \text{for}\quad a\in
  L.\] Note that $\supp(a\vee b)=\supp(a)\cup\supp(b)$ for 
$a,b\in L$, and $\bigcap_{a\in L}\supp(a)=\varnothing$.  Thus sets
of the form $\supp(a)$ yield a basis of closed sets for a topology on
$\Spc(L)$.

\begin{defn}
  A \emph{closed support datum} on $L$ is a pair $(X,\s)$ consisting of a
  topological space $X$ and a morphism $\s\colon L\to \Cl(X)$.
  A morphism of closed support
    data $(X,\s)\to(Y,\t)$ is a continuous map $f\colon X\to
  Y$ such that $\s=\Cl(f)\circ\t$.
\end{defn}

A closed support datum is nothing but a map that assigns to each  $a\in L$ a closed
subset $\s(a)\subseteq X$ such that
  \begin{enumerate}
  \item[($\varnothing$)] $\s(0)=\varnothing$ and  $\s(1)=X$,
  \item[($\vee$)] $\s(a\vee b)=\s(a)\cup\s(b)$ for all $a,b\in L$,
  \item[($\wedge$)] $\s(a\wedge b)=\s(a)\cap\s(b)$ for all $a,b\in L$. 
  \end{enumerate}

The predecessor of the following result is
  \cite[Theorem~3.2]{Ba2005} in the context of tensor triangulated categories.
  
  \begin{thm}\label{th:lat}
    The functor $X\mapsto \Cl(X)$ from topological spaces to bounded
    lattices admits $L\mapsto \Spc(L)$ as a right adjoint. Thus there
    is a natural bijection
  \begin{equation*}\label{lat}
    \Hom_\Top(X,\Spc(L))\longiso\Hom_\BLat(L,\Cl(X))
\end{equation*}    
which takes a continuous map $f\colon X\to \Spc(L)$ to the closed
support datum
  \[a\longmapsto f^{-1}(\supp(a))\quad\text{for}\quad a\in L.\]  In particular, the pair
  $(\Spc(L),\supp)$ is the final support datum on $L$.
\end{thm}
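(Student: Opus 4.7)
The plan is to mimic the proof of Theorem~\ref{th:semilat} verbatim, tracking where primeness and the top element enter. Define
\[\Si\colon\Hom_\Top(X,\Spc(L))\lto\Hom_\BLat(L,\Cl(X))\]
by $\Si(f)(a)=f^{-1}(\supp(a))$, and, in the reverse direction, associate to a closed support datum $(X,\s)$ the map
\[f_\s\colon X\lto\Spc(L),\qquad f_\s(x)\colonequals\{a\in L\mid x\not\in\s(a)\}.\]
The goal is to prove these are mutually inverse bijections.

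First I would check that $\Si$ is well defined. Preservation of finite joins and of $0$ follows as in \ref{th:semilat} from $\supp(a\vee b)=\supp(a)\cup\supp(b)$ and $\supp(0)=\varnothing$. The top element is handled by the observation that a prime ideal $I$ is by definition proper, so $1\not\in I$; hence $\supp(1)=\Spc(L)$. Preservation of meets, $\supp(a\wedge b)=\supp(a)\cap\supp(b)$, is exactly the primeness condition: $a\wedge b\not\in I$ is equivalent to $a\not\in I$ and $b\not\in I$. Pulling back along a continuous $f\colon X\to\Spc(L)$ then yields a morphism of bounded lattices into $\Cl(X)$.

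Second, I would verify that $f_\s$ takes values in $\Spc(L)$ and is continuous. That $f_\s(x)$ is an ideal uses $(\varnothing)$, $(\vee)$ and the monotonicity of $\s$ built into its values lying in $\Cl(X)$; primeness is supplied by $(\wedge)$, since $a\wedge b\in f_\s(x)$ means $x\not\in\s(a)\cap\s(b)$, forcing $a\in f_\s(x)$ or $b\in f_\s(x)$; and $f_\s(x)\ne L$ because $\s(1)=X$ gives $1\not\in f_\s(x)$. Continuity follows from the key identity
\[f_\s^{-1}(\supp(a))=\{x\in X\mid a\not\in f_\s(x)\}=\s(a),\]
which simultaneously proves $\Si(f_\s)=\s$, so $\Si$ is surjective.

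For injectivity of $\Si$, the argument is identical to \ref{th:semilat}: for any $f\colon X\to\Spc(L)$ and any $x\in X$,
\[f(x)=\{a\in L\mid a\in f(x)\}=\{a\in L\mid x\not\in f^{-1}(\supp(a))\},\]
so $f$ is recovered from $\Si(f)$. Naturality in $X$ and $L$ is a straightforward check, and the claim about $(\Spc(L),\supp)$ being the final closed support datum is obtained by taking $X=\Spc(L)$ and feeding the identity map through $\Si$. I expect no genuine obstacle; everything beyond Theorem~\ref{th:semilat} is the bookkeeping of primeness, which converts the $(\wedge)$ axiom into preservation of binary meets and the properness of prime ideals into preservation of $1$.
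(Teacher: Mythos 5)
Your proposal is correct and follows exactly the route the paper takes: the paper's proof of this theorem simply states that the argument of Theorem~\ref{th:semilat} carries over unchanged, with the meet structure on $L$ corresponding to primeness of the ideals, and your write-up is just that argument with the bookkeeping (preservation of $1$ and $\wedge$, properness and primeness of $f_\s(x)$) made explicit.
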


\begin{proof}
 The proof of Theorem~\ref{th:semilat} carries over without any
 change. The additional structure on $L$ (given by the join $\wedge$)
 corresponds to the additional condition on the ideals in $\Spc(L)$
 (to be prime).
\end{proof}

\section{Hochster duality}

Let $L$ be a bounded lattice. We consider the space $\Spc(L)$ of prime
ideals and provide a dual topology on this set.  Observe that
$\supp(a\wedge b)=\supp(a)\cap\supp(b)$ for all $a,b\in L$, and
$\bigcup_{a\in L}\supp(a)=\Spc(L)$.  Thus sets of the form
$\supp(a)$ yield a basis of open sets for a topology on $\Spc(L)$. We
denote this space $\Spc(L)^\vee$ and call it the \emph{Hochster
  dual} of $\Spc(L)$. Note that \[\Spc(L)^\vee\cong\Spc(L^\op)\]
since \[\Hom_\BLat(L,\two^\op)=\Hom_\BLat(L^\op,\two).\]

\begin{defn}
  An \emph{open support datum} on $L$ is a pair $(X,\s)$ consisting of a
  topological space $X$ and a morphism $\s\colon L\to \Omega(X)$.
  A morphism of open support
    data $(X,\s)\to(Y,\t)$ is a continuous map $f\colon X\to
  Y$ such that $\s=\Omega(f)\circ\t$.
\end{defn}

There is a natural bijection
\begin{align*}\label{eq:dual}
  \Hom_\BLat(L,\Omega(X))\longiso\Hom_\BLat(L^\op,\Cl(X))
\end{align*}
that takes an open support datum $(X,\s)$  to the closed
  support datum $(X,\t)$ with
\[\t(a)\colonequals X\setminus\s(a)\quad\text{for}\quad a\in L^\op.\]
This yields the following  reformulation of Theorem~\ref{th:lat}.

\begin{thm}\label{th:distr-lat}\pushQED{\qed}
The functor $X\mapsto \Omega(X)$
from topological spaces to bounded lattices admits
$L\mapsto \Spc(L)^\vee$ as a right adjoint. Thus there is a natural bijection
\begin{equation}\label{distr-lat}
  \Hom_\Top(X,\Spc(L)^\vee)\longiso\Hom_\BLat(L,\Omega(X)).\qedhere
\end{equation}
\end{thm}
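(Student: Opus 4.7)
The plan is to reduce Theorem~\ref{th:distr-lat} to Theorem~\ref{th:lat} by combining two formal dualisations, one on the lattice side and one on the space side. First I would verify the complementation bijection displayed immediately before the statement,
\[
\Hom_\BLat(L,\Omega(X))\longiso\Hom_\BLat(L^\op,\Cl(X)),
\]
which sends an open support datum $\s$ to the closed support datum $a\mapsto X\setminus\s(a)$. This is routine: taking complements is an order-reversing bijection $\Omega(X)\iso\Cl(X)^\op$ that swaps $\cup$ with $\cap$ and interchanges $\varnothing$ with $X$, so the conditions $(\varnothing)$, $(\vee)$, $(\wedge)$ defining an open support datum on $L$ translate precisely into the corresponding conditions for a closed support datum on $L^\op$.

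Next I would apply Theorem~\ref{th:lat} to the dual lattice $L^\op$ to obtain a natural bijection
\[
\Hom_\Top(X,\Spc(L^\op))\longiso\Hom_\BLat(L^\op,\Cl(X)).
\]
Combining this with the complementation bijection above and with the identification $\Spc(L^\op)\cong\Spc(L)^\vee$ recorded in the excerpt yields \eqref{distr-lat}. The homeomorphism $\Spc(L^\op)\cong\Spc(L)^\vee$ is itself formal: both sides have underlying set $\Hom_\BLat(L,\two^\op)=\Hom_\BLat(L^\op,\two)$, and a basic closed set $\supp(a)$ of $\Spc(L^\op)$ with $a\in L^\op$ coincides with the basic open set $\supp(a)$ of $\Spc(L)^\vee$ by the very definition of the Hochster dual topology.

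Because all the genuine content already lies in Theorem~\ref{th:lat}, no real obstacle remains; the only step requiring care is bookkeeping the two dualisations so that the composite bijection admits the expected explicit description, namely $f\mapsto (a\mapsto f^{-1}(\supp(a)))$, where now $\supp(a)$ is read as an open subset of $\Spc(L)^\vee$. Naturality in $X$ is inherited directly from that of Theorem~\ref{th:lat}, and the unit of the adjunction is again realised by the tautological map corresponding to $\id_{\Spc(L)^\vee}$.
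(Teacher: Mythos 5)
Your proposal is correct and follows the paper's own route exactly: the paper likewise derives the theorem as a reformulation of Theorem~\ref{th:lat}, using the complementation bijection $\Hom_\BLat(L,\Omega(X))\iso\Hom_\BLat(L^\op,\Cl(X))$ together with the identification $\Spc(L)^\vee\cong\Spc(L^\op)$ coming from $\Hom_\BLat(L,\two^\op)=\Hom_\BLat(L^\op,\two)$. Your extra bookkeeping of the explicit description $f\mapsto(a\mapsto f^{-1}(\supp(a)))$ and of naturality is consistent with, though left implicit in, the paper's argument.
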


\begin{rem}
(1)  The usual setting for Hochster duality are \emph{spectral
  spaces}, so spaces  of the form $\Spc(L)$ for some
  bounded distributive lattice $L$ \cite[II.3.4]{Jo1982}.

(2)  The adjunction formula \eqref{distr-lat}  may be rewritten as
\[  \Hom_\Top(X,\Hom_\BLat(L,\two))\simeq\Hom_\BLat(L,\Hom_\Top(X,\two)).\]
\end{rem}

\section{Frames}

We recall some well known facts from Stone duality.
A \emph{frame} is a complete lattice in which finite meets distribute
over arbitrary joins, so
\[a\wedge\big(\bigvee_i b_i\big)=\bigvee_i(a\wedge b_i)\quad\text{for all}\quad
  a,b_i\in F.\]
A morphism of frames is a map that preserves
all joins and finite meeets. The functor
sending a topological space $X$ to its frame $\Omega(X)$ of open sets
admits a right adjoint, which sends a frame $F$ to its space $\Pt(F)$
of points \cite[II.1.4]{Jo1982}. A \emph{point} of $F$ is by definition a frame morphism
$F\to\two$, and an open set is one of the form
\[U(a)=\{\p\in\Pt(F)\mid \p(a)=1\}\quad\text{for some}\quad a\in F.\]
This adjunction amounts to a
natural bijection
\begin{equation}\label{stone}
  \Hom_\Top(X,\Pt(F))\longiso\Hom_\Frm(F,\Omega(X)).
\end{equation}
A frame $F$ is \emph{spatial}
if there are enough points, which means that the unit of the
adjunction (given by evaluation) yields an isomorphism
\begin{equation}\label{spatial}
F\longiso \Omega(\Pt(F)).
\end{equation}

Let $L$ be a bounded lattice that is distributive. Then its ideal
lattice $\Id(L)$ is a spatial frame \cite[II.3.4]{Jo1982}. A frame
isomorphic to one of the form $\Id(L)$ is called \emph{coherent}.  Let
us consider the embedding $L\to\Id(L)$ which takes $a\in L$ to the
principal ideal
\[\principal{a}\colonequals\{b \in L\mid b \le a\}.\]

\begin{lem}
Restriction along $L\to\Id(L)$ induces for any frame $F$ a natural bijection
\begin{equation}\label{Hom}
  \Hom_\Frm(\Id(L),F)\longiso\Hom_\BLat(L,F).
\end{equation}
\end{lem}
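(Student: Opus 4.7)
The plan is to exhibit an explicit inverse to the restriction map and then verify it defines a frame morphism. Both directions rest on the fact that every ideal is a join of principal ideals: for $I\in\Id(L)$ we have $I=\bigvee_{a\in I}\principal{a}$ in $\Id(L)$.

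For injectivity, suppose $\Phi,\Phi'\colon\Id(L)\to F$ are frame morphisms that agree on principal ideals. Since both preserve arbitrary joins and every $I\in\Id(L)$ is the join of the principal ideals it contains, they agree on all of $\Id(L)$.

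For surjectivity, given $\p\col L\to F$ in $\Hom_\BLat(L,F)$, I would define
\[\Phi(I)\colonequals\bigvee_{a\in I}\p(a)\qquad(I\in\Id(L))\]
and check that $\Phi$ is a frame morphism whose restriction to principal ideals recovers $\p$. Preservation of $0$ and $1$ is immediate from $\p(0)=0$ and $1\in L\in\Id(L)$. For arbitrary joins, I would use the explicit description of joins in $\Id(L)$: an element of $\bigvee_j I_j$ is dominated by a finite join $b_1\vee\cdots\vee b_n$ with $b_k\in\bigcup_j I_j$, which makes $\Phi(\bigvee_j I_j)=\bigvee_{a\in\bigcup_j I_j}\p(a)=\bigvee_j \Phi(I_j)$ by monotonicity of $\p$.

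The main work, and the most delicate point, is preservation of finite meets, since $I\cap J$ can contain elements that are not of the form $a\wedge b$ with $a\in I$, $b\in J$. Here I would combine frame distributivity in $F$ with the fact that $\p$ preserves $\wedge$:
\[\Phi(I)\wedge\Phi(J)=\bigvee_{a\in I}\bigvee_{b\in J}\bigl(\p(a)\wedge\p(b)\bigr)=\bigvee_{(a,b)\in I\times J}\p(a\wedge b).\]
Each $a\wedge b$ lies in $I\cap J$, giving $\Phi(I)\wedge\Phi(J)\le\Phi(I\cap J)$. Conversely, any $c\in I\cap J$ contributes $\p(c)=\p(c\wedge c)$ on the right hand side (taking $a=b=c$), so $\Phi(I\cap J)\le\Phi(I)\wedge\Phi(J)$. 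Finally, $\Phi(\principal{a})=\p(a)$ since $a$ is the maximum of $\principal{a}$, so restriction returns $\p$; naturality in $F$ is automatic from the construction.
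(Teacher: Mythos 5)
Your proposal is correct and follows the same route as the paper: the paper's proof simply exhibits the inverse map $I\mapsto\bigvee_{a\in I}\phi(a)$, which is exactly your $\Phi$. You merely spell out the verifications (join/meet preservation, the meet case via frame distributivity in $F$, and the recovery of $\phi$ on principal ideals) that the paper leaves to the reader, and these checks are all accurate.
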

\begin{proof}
  The inverse map takes a morphism $\phi\colon L\to F$ to $\Id(L)\to F$ given by
  \[ I\longmapsto\bigvee_{a\in I}\phi(a)\quad\text{for}\quad
    I\in\Id(L).\qedhere\]
\end{proof}

Take $F=\two$. Then the above bijection identifies each point
$\p\in\Pt(\Id(L))$ with the prime ideal $\p^{-1}(0)\cap L$ of $L$, and
this yields an isomorphism
\begin{equation}\label{Hochster}
   \Pt(\Id(L))\longiso\Spc(L)^\vee
 \end{equation}
between the space of points of $\Id(L)$ and the Hochster dual
 of $\Spc(L)$. More precisely, for any ideal $I\in\Id(L)$ we have
 \[U(I)=\bigcup_{a\in I}U(\principal{a})\]
and \eqref{Hochster} identifies  $U(\principal{a})$ with $\supp(a)$
for each $a\in L$.

\begin{cor}\label{co:Id(L)}
Let $L$ be a bounded lattice that  is distributive. Then the assignment
\[I\longmapsto \bigcup_{a\in I}\supp(a)\]
yields an isomorphism
\begin{equation}\label{ideals}
  \Id(L)\longiso\Omega(\Spc(L)^\vee).
\end{equation}  
\end{cor}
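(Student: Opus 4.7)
The plan is to obtain the desired isomorphism by composing two isomorphisms already established in the excerpt, and then to verify that the composite agrees with the stated formula $I\mapsto \bigcup_{a\in I}\supp(a)$.

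First, since $L$ is a bounded distributive lattice, the ideal lattice $\Id(L)$ is a \emph{spatial} frame (it is coherent). Therefore the unit of the adjunction from \eqref{stone} yields, via \eqref{spatial}, an isomorphism of frames
\[\Id(L)\longiso \Omega(\Pt(\Id(L))),\quad I\longmapsto U(I).\]
Next, the isomorphism \eqref{Hochster} identifies the space of points $\Pt(\Id(L))$ with $\Spc(L)^\vee$, and therefore induces an isomorphism of frames
\[\Omega(\Pt(\Id(L)))\longiso \Omega(\Spc(L)^\vee).\]
Composing these two gives the desired isomorphism in \eqref{ideals}.

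It remains to identify the composite with the prescribed assignment. For $I\in\Id(L)$, the paragraph following \eqref{Hochster} records the formula
\[U(I)=\bigcup_{a\in I}U(\principal{a}),\]
together with the fact that \eqref{Hochster} sends each $U(\principal{a})$ to $\supp(a)$. Hence the image of $I$ under the composite is exactly $\bigcup_{a\in I}\supp(a)$, as claimed.

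There is no serious obstacle here: the work has already been done in establishing spatiality of coherent frames (cited from \cite{Jo1982}) and in constructing the identification \eqref{Hochster}. The only thing to watch is the bookkeeping to confirm that the basic open $U(\principal{a})$ in $\Pt(\Id(L))$ really does correspond to $\supp(a)\subseteq\Spc(L)^\vee$ under \eqref{Hochster}; this follows from the description, given in the remark after Lemma on restriction along $L\to\Id(L)$, of a point of $\Id(L)$ as the prime ideal $\p^{-1}(0)\cap L$, since $\p(\principal{a})=1$ precisely when $a\notin\p^{-1}(0)\cap L$.
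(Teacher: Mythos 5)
Your proposal is correct and follows essentially the same route as the paper: the paper likewise deduces the isomorphism from spatiality of the coherent frame $\Id(L)$ (via \eqref{spatial}) combined with the identification \eqref{Hochster} of $\Pt(\Id(L))$ with $\Spc(L)^\vee$ and the matching of $U(\principal{a})$ with $\supp(a)$. Your explicit check of the composite's formula is just a spelled-out version of what the paper records in the paragraph preceding the corollary.
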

\begin{proof}
  The isomorphism is a consequence of \eqref{spatial} since $\Id(L)$
  is a spatial frame; its inverse sends an open set $U$ to
  $\{a\in L\mid \supp(a)\subseteq U\}$. 
\end{proof}

\begin{rem}
(1)  Stone duality yields an alternative proof of
  Theorem~\ref{th:distr-lat} when the lattice $L$ is distributive,
  by combining \eqref{stone}, \eqref{Hom}, and \eqref{Hochster}.

(2)  The adjunction formula \eqref{stone}  may be rewritten as
\[  \Hom_\Top(X,\Hom_\Frm(F,\two))\simeq\Hom_\Frm(F,\Hom_\Top(X,\two)).\]

(3) With the canonical identification $\Id(L)=\Hom_\JSLat(L,\two^\op)$ for
  a join-semilattice $L$, the isomorphism \eqref{Hochster}  may be rewritten as
  \[ \Hom_\Frm(\Hom_\JSLat(L,\two^\op),\two)\longiso\Hom_\BLat(L,\two)\] whereas the
  isomorphism \eqref{ideals} becomes
  \[  \Hom_\JSLat(L,\two^\op) \longiso\Hom_\Top(\Hom_\BLat(L,\two),\two).\]
\end{rem}

\section{Triangulated categories and beyond}

Let $\T$ be an essentially small triangulated category and let
$\Ob\T$ denote the class of objects. For objects $X,Y$ in $\T$ we write
\[X\sim Y \;:\iff\; [X]=[Y]\] where $[ X]$ denotes
the thick subcategory generated by $X$.  This provides an equivalence
relation and the set of equivalence classes
\[L(\T)\colonequals\Ob\T/{\sim}\] is partially ordered via inclusion;  it is a
join-semilattice with
\[ [X]\vee [Y]=[X \oplus Y].\] 
Let   $\Thick(\T)$ denote the lattice of thick subcategories of $\T$.

\begin{lem}\label{le:thick}
  The assignment
  \[\T\supseteq\calS\longmapsto\{[X]\in L(\T)\mid X\in\calS\}\subseteq
    L(\T)\]
  yields a lattice isomorphism $\Thick(\T)\iso\Id(L(\T))$.
\end{lem}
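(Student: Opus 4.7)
The plan is to exhibit an explicit inverse $G\colon\Id(L(\T))\to\Thick(\T)$ to the stated map $F\colon\calS\mapsto\{[X]\in L(\T)\mid X\in\calS\}$, namely
\[G(I)\colonequals\{X\in\T\mid [X]\in I\},\]
and check that both are well defined order-preserving maps that are mutually inverse. Since the partial order on both $\Thick(\T)$ and $\Id(L(\T))$ is inclusion, and $[X]\le [Y]$ in $L(\T)$ means $[X]\subseteq [Y]$ as thick subcategories, order preservation is built into the definitions; monotone bijections between posets with all finite joins and meets are automatically lattice isomorphisms, so this is all one needs.

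First I would verify that $F(\calS)$ is an ideal. Closure under finite joins uses that $\calS$ is closed under direct sums together with $[X]\vee [Y]=[X\oplus Y]$. Downward closure is immediate from the definition of the order: if $[Y]\le[X]$ and $X\in\calS$, then $Y\in[X]\subseteq\calS$, so $[Y]\in F(\calS)$. The minimum element $[0]$ lies in $F(\calS)$ since $0\in\calS$.

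Next I would check that $G(I)$ is a thick subcategory. Shift invariance is free because $[\Sigma X]=[X]$. For the two-out-of-three property, a triangle $X\to Y\to Z\to \Sigma X$ with $X,Z\in G(I)$ gives $Y\in[X\oplus Z]$, so $[Y]\le [X]\vee [Z]\in I$ and hence $[Y]\in I$ by downward closure. Direct summands are handled identically via $[X]\le [X\oplus Y]$.

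Finally, the two composites: $FG(I)=\{[X]\mid [X]\in I\}=I$ is tautological, while $GF(\calS)=\calS$ reduces to the observation that $[X]\in F(\calS)$ means $[X]=[X']$ for some $X'\in\calS$, whence $X\in[X']\subseteq\calS$; conversely every $X\in\calS$ clearly satisfies $[X]\in F(\calS)$. There is no genuine obstacle — the only point that requires a moment's care is keeping straight the distinction between the thick subcategory $[X]\subseteq\T$ and its class $[X]\in L(\T)$, and using throughout that the order on $L(\T)$ is by inclusion of generated thick subcategories.
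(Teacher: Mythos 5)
Your proposal is correct and follows the same route as the paper, which likewise proves the lemma by exhibiting the inverse $I\mapsto\{X\in\T\mid [X]\in I\}$; you merely spell out the routine verifications (ideal, thickness, the two composites) that the paper leaves implicit. One small caution: a monotone bijection alone need not be a lattice isomorphism, but since you check that \emph{both} maps are order-preserving and mutually inverse, you do get an order isomorphism and hence a lattice isomorphism, so the argument stands.
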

\begin{proof}
  The inverse map sends an ideal $I\subseteq L(\T)$ to the full
  subcategory of $\T$ that is given by the objects $X$ with $[X]\in
  I$.
\end{proof}

Now let $(\T,\otimes)$ be a tensor triangulated category, i.e.\ a
triangulated category equipped with a monoidal structure (not
necessarily symmetric) which is exact in each variable.  A thick
subcategory $\calS\subseteq\T$ is a \emph{tensor ideal} if
$X\otimes Y$ is in $\calS$ provided that one of $X,Y$ is in $\calS$,
and an ideal is \emph{radical} if $X$ is in $\calS$ provided that
$X\otimes X$ is in $\calS$.

For objects $X,Y$ in $\T$ we write
\[X\approx Y \;:\iff\; \langle X\rangle=\langle Y\rangle\] where
$\langle X\rangle$ denotes the radical thick tensor ideal generated by
$X$. We obtain an equivalence relation and the set of
equivalence classes
\[L(\T,\otimes)\colonequals\Ob\T/{\approx}\] is partially ordered
via inclusion.

\begin{lem}\label{le:tensor}
For objects $X,Y$ in $\T$ we have $\langle X\rangle\cap \langle
Y\rangle=\langle X\otimes Y\rangle$.
\end{lem}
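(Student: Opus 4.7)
The containment $\langle X\otimes Y\rangle\subseteq\langle X\rangle\cap\langle Y\rangle$ is immediate: since $\langle X\rangle$ is a tensor ideal containing $X$, it contains $X\otimes Y$, and symmetrically $X\otimes Y\in\langle Y\rangle$; the intersection is itself a radical thick tensor ideal containing $X\otimes Y$, and hence contains the smallest such, namely $\langle X\otimes Y\rangle$.

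For the reverse inclusion, take $Z\in\langle X\rangle\cap\langle Y\rangle$. Since $\langle X\otimes Y\rangle$ is radical, it suffices to show $Z\otimes Z\in\langle X\otimes Y\rangle$. My plan is to reach this via a two-stage auxiliary class argument, in the style of Balmer's proof for the symmetric tensor triangulated setting. First define
\[\mathcal{J}\colonequals\{U\in\T\mid U\otimes Y\in\langle X\otimes Y\rangle\}.\]
Clearly $X\in\mathcal{J}$, and I would verify that $\mathcal{J}$ is a radical thick tensor ideal: thickness is inherited from $(-)\otimes Y$ being a triangle functor; the tensor ideal property from associativity together with the tensor ideal property of $\langle X\otimes Y\rangle$; and radicality from the identity $(U\otimes Y)^{\otimes 2}\cong U^{\otimes 2}\otimes Y^{\otimes 2}$, which (combined with tensoring $U^{\otimes 2}\otimes Y$ once more with $Y$) converts the hypothesis $U\otimes U\in\mathcal{J}$ into $(U\otimes Y)^{\otimes 2}\in\langle X\otimes Y\rangle$, whence $U\otimes Y\in\langle X\otimes Y\rangle$ by the radicality of $\langle X\otimes Y\rangle$. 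Consequently $\langle X\rangle\subseteq\mathcal{J}$, so $Z\otimes Y\in\langle X\otimes Y\rangle$.

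Next, define
\[\mathcal{I}\colonequals\{V\in\T\mid Z\otimes V\in\langle X\otimes Y\rangle\}.\]
The previous step gives $Y\in\mathcal{I}$, and the analogous checks show that $\mathcal{I}$ is again a radical thick tensor ideal. Hence $\langle Y\rangle\subseteq\mathcal{I}$, and since $Z\in\langle Y\rangle$ we conclude $Z\otimes Z\in\langle X\otimes Y\rangle$, which by radicality of $\langle X\otimes Y\rangle$ forces $Z\in\langle X\otimes Y\rangle$.

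The principal obstacle is verifying the closure properties of $\mathcal{J}$ and $\mathcal{I}$ without assuming symmetry of $\otimes$: the rearrangement identity $(U\otimes Y)^{\otimes 2}\cong U^{\otimes 2}\otimes Y^{\otimes 2}$ and the interchange of $T\otimes U$ with $U\otimes T$ under $(-)\otimes Y$ both rely on a braiding. In the non-symmetric setting one must either enlarge the auxiliary classes so that the defining condition survives insertion of arbitrary tensor factors on both sides, or invoke a description of $\langle X\otimes Y\rangle$ as an intersection of completely prime thick tensor ideals along the lines of \cite{NVY2019,BO2023}; the combinatorial heart of both approaches is that a completely prime thick tensor ideal contains $X\otimes Y$ if and only if it contains $X$ or $Y$.
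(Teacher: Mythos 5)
Your architecture is exactly the paper's: the easy inclusion, then for $Z\in\langle X\rangle\cap\langle Y\rangle$ the two reductions ``$Z\in\langle X\rangle\Rightarrow Z\otimes Y\in\langle X\otimes Y\rangle$'' and ``$Z\in\langle Y\rangle\Rightarrow Z\otimes Z\in\langle X\otimes Y\rangle$'', finished off by radicality (the paper packages this as $\langle Z\rangle\subseteq\langle Z\otimes Z\rangle\subseteq\langle Z\otimes Y\rangle\subseteq\langle X\otimes Y\rangle$). In the symmetric or braided case your verification of $\mathcal{J}$ and $\mathcal{I}$ is complete. But the lemma is stated for a not necessarily symmetric tensor product, and there your proof has a genuine gap, which you yourself flag: you do not verify that $\mathcal{J}$ and $\mathcal{I}$ are two-sided and radical, and neither escape route you sketch works as stated. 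Enlarging the auxiliary classes leads straight back to the same question (one ends up needing $X\otimes C\otimes Y\in\langle X\otimes Y\rangle$ for arbitrary $C$), and the fallback via completely prime ideals is not available in general: without a braiding one only knows that radical ideals are intersections of primes in the Nakano--Vashaw--Yakimov sense, where primality is tested against all $A\otimes C\otimes B$, and completely prime ideals may be too scarce; that $\langle X\otimes Y\rangle$ is an intersection of completely primes is itself a substantial unproved claim.

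The missing idea is much more elementary and uses only plain-square radicality together with two-sidedness. From
\[(B\otimes A)\otimes(B\otimes A)=B\otimes(A\otimes B)\otimes A\in\langle A\otimes B\rangle\]
one gets $B\otimes A\in\langle A\otimes B\rangle$, and then from
\[(A\otimes C\otimes B)\otimes(A\otimes C\otimes B)=A\otimes C\otimes(B\otimes A)\otimes C\otimes B\in\langle A\otimes B\rangle\]
one gets $A\otimes C\otimes B\in\langle A\otimes B\rangle$ for every $C$. With this fact all closure properties of your $\mathcal{J}$ and $\mathcal{I}$ follow with no braiding: for instance, if $U\otimes W$ with $U\in\mathcal{J}$ is to be tested, then $U\otimes W\otimes Y\in\langle U\otimes Y\rangle\subseteq\langle X\otimes Y\rangle$; and for radicality of $\mathcal{J}$, if $U\otimes U\otimes Y\in\langle X\otimes Y\rangle$ then $(U\otimes Y)\otimes(U\otimes Y)\in\langle U\otimes(U\otimes Y)\rangle\subseteq\langle X\otimes Y\rangle$ (take $A=U$, $C=Y$, $B=U\otimes Y$ above), so $U\otimes Y\in\langle X\otimes Y\rangle$ by radicality of $\langle X\otimes Y\rangle$; the left-ideal and radical properties of $\mathcal{I}$ are handled the same way. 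This short computation is what the paper's one-line chain of inclusions is implicitly resting on; once you add it, your two-stage argument closes and coincides with the paper's proof.
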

\begin{proof}
One inclusion is clear. Let $Z \in\langle X\rangle\cap \langle
Y\rangle$. Then we compute
\[\langle Z\rangle\subseteq \langle Z\otimes Z\rangle\subseteq
  \langle Z\otimes Y\rangle\subseteq \langle X\otimes Y\rangle.\]
Thus $Z\in \langle X\otimes Y\rangle$.
\end{proof}

We record the basic properties of $L(\T,\otimes)$.

\begin{prop}\label{pr:L(T)}
  Let   $(\T,\otimes)$ be a tensor triangulated category.
Then the poset  $L(\T,\otimes)$ is a distributive lattice and its ideal lattice
$\Id(L(\T,\otimes))$ identifies with the lattice of radical thick tensor
ideals of $\T$.
\end{prop}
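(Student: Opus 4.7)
The plan is to first identify the lattice operations on $L(\T,\otimes)$ explicitly, then read off distributivity from bilinearity of the tensor product, and finally establish the ideal correspondence by adapting the argument of Lemma~\ref{le:thick}. The two key formulas to establish are
\[[X]\vee[Y]=[X\oplus Y]\qquad\text{and}\qquad [X]\wedge[Y]=[X\otimes Y].\]
The join formula follows because $X$ and $Y$ are direct summands of $X\oplus Y$, giving $\langle X\rangle,\langle Y\rangle\subseteq\langle X\oplus Y\rangle$, while conversely any radical thick tensor ideal containing both $X$ and $Y$ contains their direct sum. The meet formula is exactly Lemma~\ref{le:tensor}. Both expressions are well defined on equivalence classes since $\langle X\oplus Y\rangle=\langle X\rangle\vee\langle Y\rangle$ and $\langle X\otimes Y\rangle=\langle X\rangle\cap\langle Y\rangle$ depend only on $\langle X\rangle$ and $\langle Y\rangle$.

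With these formulas available, distributivity is a one line computation using bilinearity of the tensor product:
\[[X]\wedge([Y]\vee[Z])=[X\otimes(Y\oplus Z)]=[X\otimes Y\oplus X\otimes Z]=([X]\wedge[Y])\vee([X]\wedge[Z]).\]

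For the identification of $\Id(L(\T,\otimes))$ with the radical thick tensor ideals of $\T$, I would mimic Lemma~\ref{le:thick}: send a radical thick tensor ideal $\calS$ to $\{[X]\in L(\T,\otimes)\mid X\in\calS\}$, with inverse sending an ideal $I$ to the full subcategory on objects $X$ with $[X]\in I$. The work is in checking that the inverse actually produces a radical thick tensor ideal. Closure under direct sums uses the join formula; closure under direct summands uses downward closure of $I$ together with $\langle X'\rangle\subseteq\langle X\rangle$ whenever $X'$ is a summand of $X$; closure under triangles follows from the observation that a triangle $X\to Y\to Z\to$ places $Y\in\langle X\oplus Z\rangle$, so $[Y]\le [X]\vee[Z]\in I$; the tensor ideal property is immediate from $[X\otimes Y]=[X]\wedge[Y]\le[X]$; and radicality follows from the special case $\langle X\otimes X\rangle=\langle X\rangle$ of Lemma~\ref{le:tensor}, i.e.\ $[X\otimes X]=[X]$. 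The verification that the two maps are mutually inverse and order preserving is then formal. The main point where the argument could go wrong is ensuring that meets and joins on equivalence classes land where one expects, and this is precisely where Lemma~\ref{le:tensor} does the heavy lifting; everything else is routine bookkeeping parallel to the non-tensor case.
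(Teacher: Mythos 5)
Your proposal is correct and follows essentially the same route as the paper: the join/meet formulas $\langle X\rangle\vee\langle Y\rangle=\langle X\oplus Y\rangle$ and $\langle X\rangle\wedge\langle Y\rangle=\langle X\otimes Y\rangle$ (the latter being Lemma~\ref{le:tensor}), distributivity from the tensor product distributing over direct sums, and the ideal correspondence by transporting the proof of Lemma~\ref{le:thick}. The only difference is that you spell out the routine verifications (closure under triangles, summands, tensoring, radicality) which the paper compresses into ``carries over without change.''
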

\begin{proof}
  For objects $X,Y$ in $\T$ we have 
  \[ \langle X\rangle\vee \langle Y\rangle=\langle X \oplus
    Y\rangle\qquad\text{and}\qquad \langle X\rangle\wedge \langle Y
    \rangle=\langle X \otimes Y\rangle\] thanks to
  Lemma~\ref{le:tensor}.  The tensor product distributes over sums and
  this implies the distributivity of $L(\T,\otimes)$.  The second
  assertion is the analogue of Lemma~\ref{le:thick} and its proof
  carries over without change.
\end{proof}

Let us mention a couple of consequences. For example, Corollary~\ref{co:Id(L)} appplies and yields a
description of the lattice of radical thick tensor ideals of $\T$ in
terms of the space $\Spc(L(\T,\otimes))$
\cite[Theorem~10.4]{Ba2005}. Also, we see that the lattice of radical
thick tensor ideals of $\T$ is a coherent frame
\cite[Theorem~3.1.9]{KP2017}.

We conclude that the material developed in the previous sections can
be applied towards the description of thick subcategories and thick
tensor ideals. For some recent applications of
this lattice theoretic approach, see for example \cite{BCHNPS2023}.

The thoughtful reader will notice that other settings (different from
triangulated categories) are perfectly feasible. Classical examples
are categories of modules or sheaves and their Serre subcategories;
this is in line with Gabriel's reconstruction of a noetherian scheme
from its category of coherent sheaves \cite{Ga1962}. 

\section{Tensor exact geometry}

For exact categories we indicate the rudiments of `tensor exact
geometry' which may be viewed as the analogue of tensor triangular
geometry \cite{Ba2010}. Let $(\C,\otimes)$ be a \emph{tensor exact category}; thus
$\C$ is an exact category in the sense of Quillen (so equipped with a
distinguished class of exact sequences) and there is a monoidal
structure (not necessarily symmetric) which is exact in each
variable. A \emph{thick subcategory} is a full additive subcategory
$\B\subseteq\C$ satisfying the \emph{two-out-of-three property}, so
any exact sequence from $\C$ lies in $\B$ if two of its three terms
are in $\B$. We make the same definitions as before for a tensor
triangulated category and obtain with same proof the analogue of
Proposition~\ref{pr:L(T)}.

\begin{prop}
   Let   $(\C,\otimes)$ be a tensor exact category.
Then the poset  $L(\C,\otimes)$ is a distributive lattice and its ideal lattice
$\Id(L(\C,\otimes))$ identifies with the lattice of radical thick tensor
ideals of $\C$.\qed
\end{prop}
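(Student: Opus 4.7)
The plan is to transcribe the proof of Proposition~\ref{pr:L(T)} to the exact setting, verifying that the few identities involved survive the change of framework. First I would record the relevant analogous definitions: for $X\in\C$, let $\langle X\rangle$ denote the smallest radical thick tensor ideal containing $X$, where \emph{thick} now refers to the two-out-of-three property on exact sequences together with closure under direct summands, \emph{tensor ideal} means closed under $-\otimes Y$ and $Y\otimes -$ for every $Y\in\C$ (since the monoidal structure need not be symmetric), and \emph{radical} means that $X\otimes X\in\B$ implies $X\in\B$. An arbitrary intersection of such ideals is again of the same type, so $\langle X\rangle$ is well defined.

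The central step is to verify the exact analogue of Lemma~\ref{le:tensor}: for all $X,Y\in\C$,
\[\langle X\rangle\cap\langle Y\rangle=\langle X\otimes Y\rangle.\]
The inclusion $\supseteq$ is the tensor-ideal property. For the other direction, the chain used in Lemma~\ref{le:tensor} carries over verbatim: for $Z\in\langle X\rangle\cap\langle Y\rangle$ one has
\[\langle Z\rangle\subseteq\langle Z\otimes Z\rangle\subseteq\langle Z\otimes Y\rangle\subseteq\langle X\otimes Y\rangle,\]
where the first inclusion uses radicality, the second uses $Z\in\langle Y\rangle$ together with the tensor-ideal property on the right, and the third uses $Z\in\langle X\rangle$ and the symmetric statement on the left. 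The argument is purely formal and nowhere uses the existence of triangles.

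Granted this, the join in $L(\C,\otimes)$ is $\langle X\rangle\vee\langle Y\rangle=\langle X\oplus Y\rangle$ (by closure under direct summands) and the meet is $\langle X\rangle\wedge\langle Y\rangle=\langle X\otimes Y\rangle$. Distributivity of $L(\C,\otimes)$ follows at once from the isomorphisms $X\otimes(Y\oplus Z)\cong(X\otimes Y)\oplus(X\otimes Z)$ and $(Y\oplus Z)\otimes X\cong(Y\otimes X)\oplus(Z\otimes X)$. Finally, the identification of $\Id(L(\C,\otimes))$ with the lattice of radical thick tensor ideals proceeds exactly as in Lemma~\ref{le:thick}: the map $\B\mapsto\{\langle X\rangle\mid X\in\B\}$ has as inverse $I\mapsto\{X\in\C\mid\langle X\rangle\in I\}$, and both are order preserving.

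The main obstacle, such as it is, lies not in any deep step but in confirming that the closure constructions behave correctly in the exact category setting. Specifically, one needs to verify that $-\otimes Y$ and $Y\otimes -$ preserve the two-out-of-three property, which uses exactness of the tensor product in each variable, and that an arbitrary intersection of radical thick tensor ideals is again of that type, which is immediate from the defining closure conditions. Once these stability properties are recorded, every step of Lemma~\ref{le:tensor} and Proposition~\ref{pr:L(T)} transfers to the exact setting word for word.
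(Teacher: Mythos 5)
Your proposal is correct and follows exactly the route the paper takes: the paper's proof of this proposition is literally "make the same definitions and run the proofs of Lemma~\ref{le:tensor}, Proposition~\ref{pr:L(T)} and Lemma~\ref{le:thick} again," and your transcription (radicality for $\langle Z\rangle\subseteq\langle Z\otimes Z\rangle$, two-sided tensor-ideal property for the remaining inclusions, additivity of $\otimes$ for distributivity, and the thick-subcategory/ideal bijection) is precisely that argument, with the useful extra remark that exactness of $\otimes$ in each variable is what makes the closure constructions behave in the exact setting.
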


A cornerstone of tensor triangular geometry is the classification of radical thick tensor
ideals \cite[Theorem~10.4]{Ba2005}; its analogue for tensor exact
categories is now a consequence of Corollary~\ref{co:Id(L)}.

\begin{cor}
  Taking an object to its  support yields an isomorphism between  the lattice of radical thick tensor
ideals of $\C$ and $\Omega(\Spc(L(\C,\otimes))^\vee)$.\qed
\end{cor}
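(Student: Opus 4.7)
The plan is to prove the corollary as a two-step composition of results already established in the paper. First, the preceding proposition supplies an isomorphism
\[\{\text{radical thick tensor ideals of }\C\}\longiso\Id(L(\C,\otimes)),\qquad\calS\longmapsto\{\langle X\rangle\mid X\in\calS\},\]
and records that the lattice $L(\C,\otimes)$ is distributive. Hence Corollary~\ref{co:Id(L)} applies to $L=L(\C,\otimes)$ and delivers the second isomorphism
\[\Id(L(\C,\otimes))\longiso\Omega(\Spc(L(\C,\otimes))^\vee),\qquad I\longmapsto\bigcup_{a\in I}\supp(a).\]
Composing the two yields an isomorphism that sends $\calS$ to $\bigcup_{X\in\calS}\supp(\langle X\rangle)$; setting $\supp(X)\colonequals\supp(\langle X\rangle)$, this is exactly the union of the supports of the objects of $\calS$, which is the natural notion of the support of $\calS$. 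The inverse, read off from Corollary~\ref{co:Id(L)}, sends an open set $U$ to the full subcategory $\{X\in\C\mid\supp(X)\subseteq U\}$.

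I do not anticipate a genuine obstacle: the statement is a formal consequence of the preceding proposition and Corollary~\ref{co:Id(L)}. The only step meriting a moment's care is checking that the triangulated-case proof of Proposition~\ref{pr:L(T)} really does transport to the exact setting, since it depends on the analogue of Lemma~\ref{le:tensor}. That analogue rests on the chain $\langle Z\rangle\subseteq\langle Z\otimes Z\rangle\subseteq\langle Z\otimes Y\rangle\subseteq\langle X\otimes Y\rangle$, which uses only biexactness of $\otimes$ in each variable and the two-out-of-three definition of thick subcategory in a tensor exact category, both built into the hypotheses. With distributivity of $L(\C,\otimes)$ and the ideal-theoretic identification of radical thick tensor ideals thus granted, the corollary follows by pure formal composition.
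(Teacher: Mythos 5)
Your proposal is correct and follows exactly the route the paper intends: the corollary is stated with a \qed precisely because it is the composite of the preceding proposition (radical thick tensor ideals $\cong\Id(L(\C,\otimes))$, with $L(\C,\otimes)$ distributive) and Corollary~\ref{co:Id(L)}, which is what you do. The only tiny inaccuracy is in your side remark on transporting Lemma~\ref{le:tensor}: the first inclusion $\langle Z\rangle\subseteq\langle Z\otimes Z\rangle$ uses radicality of the generated ideal, not just biexactness and the two-out-of-three property, but this does not affect the argument.
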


\begin{exm}
Let $G$ be a finite group scheme over a field $k$. We consider
the category  $(\C,\otimes)=(\mod kG,\otimes)$ of finite dimensional $k$-linear
representations of $G$ with the usual tensor exact structure. We write
$H^*(G,k)$ for the cohomology ring of $G$ with coefficients in $k$
and $\Spec(H^*(G,k))$ for its spectrum of homogeneous prime ideals
(with the Zariski topology). Then results from
\cite{Benson/Iyengar/Krause/Pevtsova:2018, Friedlander/Pevtsova:2007a}
yield a homeomorphism
\[\Spec(H^*(G,k)) \longiso \Spc(L(\mod kG,\otimes))\]
given by
\[H^*(G,k)\supseteq \frp\longmapsto \{M\in\mod
  kG\mid \Ext_{kG}^*(M,M)_\frp=0\}\subseteq L(\mod kG,\otimes).\]
Let $\bfD^b(\mod kG)$ denote the bounded derived category of $\mod
kG$ with the induced tensor structure. It is interesting to note that
the inclusion $\mod kG\to\bfD^b(\mod kG)$ induces an isomorphism
\[L(\bfD^b(\mod kG),\otimes)\longiso L(\mod kG,\otimes),\]
reconciling triangulated and exact structure.
\end{exm}

\subsection*{Acknowledgements}

It is a pleasure to thank Greg Stevenson for several helpful
discussions. Also, I am grateful to Paul Balmer for sharing some
private notes \cite{BO2023}. This work was supported by the Deutsche
Forschungsgemeinschaft (SFB-TRR 358/1 2023 - 491392403).


\begin{thebibliography}{10}

\bibitem{Ba2005} P. Balmer, \emph{The spectrum of prime ideals in
    tensor triangulated categories}, J. Reine
  Angew. Math. \textbf{588} (2005), 149--168.

\bibitem{Ba2010} P. Balmer, \emph{Tensor triangular geometry}, in
  {Proceedings of the International Congress of Mathematicians.
    Volume II}, 85--112, Hindustan Book Agency, New Delhi, 2010.

\bibitem{BO2023} P. Balmer, P. S. Ocal, \emph{Universal support for
    triangulated categories}, private communication, 2023.

\bibitem{BCHNPS2023} T. Barthel, N. Castellana, D. Heard, N. Naumann, L. Pol,
  and B. Sanders, Descent in tensor triangular geometry, arXiv:2305.02308.

\bibitem{Benson/Iyengar/Krause/Pevtsova:2018} D.~J. Benson,
S.~B. Iyengar, H.~Krause, and J.~Pevtsova, \emph{Stratification for
  module categories of finite group schemes},
J.~Amer.~Math.~Soc. \textbf{31} (2018), no.~1, 265--302.

\bibitem{BKS2007} A. B. Buan, H. Krause, and \O{}. Solberg, \emph{Support
  varieties–an ideal approach}, Homology, Homotopy Appl., 9 (2007),
  45–74.

\bibitem{Friedlander/Pevtsova:2007a}
E.~M. Friedlander and J.~Pevtsova, \emph{{$\Pi$-supports for modules for finite groups schemes}}, Duke
  Math.\ J. \textbf{139} (2007), 317--368.

\bibitem{Ga1962} P. Gabriel, \emph{Des cat\'egories ab\'eliennes},
  Bull. Soc. Math. France {\bf 90} (1962), 323--448.
  
\bibitem{GS2022} S. Gratz and G. Stevenson, \emph{Approximating triangulated
    categories by spaces}, arXiv:2205.13356.

\bibitem{Jo1982} P. T. Johnstone, \emph{Stone spaces}, Cambridge
  Stud. Adv. Math., vol.~3, Cambridge Univ. Press, Cambridge, 1982.

\bibitem{KP2017} J. Kock\ and\ W. Pitsch, \emph{Hochster duality in derived
  categories and point-free reconstruction of schemes},
Trans. Amer. Math. Soc. {\bf 369} (2017), no.~1, 223--261.

\bibitem{Kr2023} H. Krause, \emph{Central support for triangulated
  categories}, arXiv:2301.10464.

\bibitem{NVY2019} D. K. Nakano, K. B. Vashaw\ and\ M. T. Yakimov,
  \emph{Noncommutative tensor triangular geometry}, Amer. J. Math. {\bf 144}
  (2022), no.~6, 1681--1724.
   
\end{thebibliography}
\end{document}